\newcommand{\om}{\omega}
\newcommand{\ka}{\kappa}
\newcommand{\ba}{\mathcal{G}}
\newcommand{\Ann}{\mathfrak{ann}}
\newcommand{\fk}{\mathfrak k}
\newcommand{\fh}{\mathfrak h}
\newcommand{\fm}{\mathfrak m}
\newcommand{\Ad}{{\rm Ad}}
\newcommand{\ad}{{\rm ad}}
\newcommand{\na}{\nabla}
\newcommand{\U}{\Upsilon}
\newcommand{\lz}{[\![}
\newcommand{\pz}{]\!]}
\newcommand{\R}{\mathbb{R}}
\newtheorem{prop*}{Proposition}
\newtheorem{thm*}{Theorem}
\newtheorem{lemma*}{Lemma}
\newtheorem{cor*}{Corollary}
\newtheorem{rem*}{Remark}
\theoremstyle{definition}
\newtheorem{def*}{Definition}
\theoremstyle{remark}
\newtheorem{exam}{Example}
\begin{document}
\title{Notes on symmetric conformal geometries}
\author{Jan Gregorovi\v c and Lenka Zalabov\' a}
\address{J.G. Department of Mathematics and Statistics, Faculty of Science, Masaryk University, Kotl\' a\v rsk\' a 2, Brno, 611 37, Czech Republic; L.Z. Institute of Mathematics and Biomathematics, Faculty of Science, University of South Bohemia, Brani\v sovsk\' a 1760, \v Cesk\' e Bud\v ejovice, 370 05, Czech Republic}
\email{jan.gregorovic@seznam.cz, lzalabova@gmail.com}
\thanks{First author supported by the grant P201/12/G028 of the Czech Science Foundation. Second author supported by the grant P201/11/P202 of the Czech Science Foundation.}
\keywords{conformal geometry, symmetric space, parallel Weyl tensor}
\subjclass[2010]{53C35, 53A30}

\maketitle
\begin{abstract} 
In this article, we summarize the results on symmetric conformal geometries. We review the results following from the general theory of symmetric parabolic geometries and prove several new results for symmetric conformal geometries. In particular, we show that each symmetric conformal geometry is either locally flat or covered by a pseudo-Riemannian symmetric space, where the covering is a conformal map. We construct examples of locally flat symmetric conformal geometries that are not pseudo-Riemannian symmetric spaces.
\end{abstract}

\section{Symmetric conformal geometries} \label{prvni-cast}
Assume $p+q \geq 3$ and $p \geq q \geq 0$ and let $M$  be a connected manifold of dimension $p+q$.
Two pseudo--Riemannian metrics $g$ and $\hat g$ of the signature $(p,q)$ on a manifold $M$ are known to be \emph{conformally equivalent}, if there is a positive smooth function $f$ such that $ \hat g=f^2g$. An equivalence class $[g]$ of pseudo--Riemannian metrics that are conformally equivalent to $g$ is then called a \emph{conformal structure} on $M$. A \emph{ (local) conformal transformation} is then a  (local) diffeomorphism of $M$ that preserves the conformal structure on $M$.

Each conformal structure on $M$ can be described as a first order G--structure corresponding to $CO(p,q) \subset GL(p+q,\R)$. The well known prolongation procedure allows to construct a parabolic geometry $(\ba \to M, \om)$ of type $(PO(p+1,q+1),P)$ from each such $CO(p,q)$--structure $M$, see \cite[Section 1.6.]{parabook}. Here $PO(p+1,q+1)$ is the projectivized pseudo--orthogonal group of signature $(p+1,q+1)$ and $P$ is the stabilizer of a null line in $\R^{p+q+2}$. 

In fact, there is a correspondence between conformal structures of signature $(p,q)$ on $M$ and parabolic geometries $(\ba \to M,\om)$ of type $(PO(p+1,q+1),P)$, which can be made bijective using the normalization condition on the curvature $\kappa$ of the parabolic geometry, see \cite[Section 3.1.]{parabook}. Moreover, for a normal parabolic geometry of type $(PO(p+1,q+1),P)$, the vanishing of $\kappa$  is equivalent to the vanishing of the Weyl tensor $W$ in the case $p+q>3$, and the vanishing of the Cotton-York tensor $C$ in the case $p+q=3$, of the corresponding conformal structure.
This leads to the description of conformal geometries as normal $|1|$--graded parabolic geometries, which is well understood, see \cite[Section 1.6.]{parabook}. We use this description to study conformal symmetries on conformal geometries.

\begin{def*}
A \emph{(local) conformal symmetry} at $x \in M$ on a conformal manifold $M$ is a (local) conformal transformation $S_x$ on $M$ such that:
\begin{enumerate}
\item $S_x(x)=x$,
\item $T_xS_x=-$id on $T_xM$.
\end{enumerate}
We call the geometry \emph{conformally symmetric}, if there exists a conformal symmetry at each point $x \in M$. We describe each \emph{system of conformal symmetries} on $M$ by a map $S: M \times M \to M$ given by $S(x,y)=S_x(y)$ for the chosen symmetries $S_x$ at $x$, and we call the system \emph{smooth}, if the map $S$ is smooth in both variables.
\end{def*}

The simplest example of a symmetric conformal geometry is the \emph{flat model} $PO(p+1,q+1)/P$, 
which is the M\" obius space of signature $(p,q)$.
It is a homogeneous space of dimension $p+q$ with the transitive action of $PO(p+1,q+1)$ and there is the natural  $PO(p+1,q+1)$--invariant conformal structure of signature $(p,q)$ induced from the standard metric in $\R^{p+q+2}$ of signature $(p+1,q+1)$. 

Let us follow the notation from \cite[Section 1.6.3]{parabook}. 
Thus we consider the standard metric in $\R^{p+q+2}$ given by the block matrix of the block form
$$
m=\left(
\begin{matrix}
0&0&1 \\ 0&J&0 \\ 1&0&0
\end{matrix}
\right)
$$
with blocks of sizes $1$, $p+q$ and $1$, where $J$ is the diagonal matrix with $p$ ones and $q$ minus ones on the diagonal. In particular, if $e_0, \dots, e_{p+q+1}$ is the canonical basis of $\R^{p+q+2}$, then $P$ is the stabilizer of the line $\langle e_0 \rangle$ generated by the first basis vector. Then
we write elements of $\frak{so}(p+1,q+1)$ as block matrices of the block form
$$
\left(
\begin{matrix}
a&Z&0 \\ X&A&-JZ^T \\ 0&-X^TJ&-a
\end{matrix}
\right)
$$
with blocks of sizes $1$, $p+q$ and $1$,
where $X \in \R^{p+q}$, $Z\in \R^{p+q*}$ and $(a,A)\in \frak{co}(p,q)$. 
One can verify that all conformal symmetries at the origin $eP$ are exactly left multiplications by elements of $PO(p+1,q+1)$ represented by matrices of the form
$$
s_{Z}=\left(
\begin{matrix}
-1&-Z& {1 \over 2}ZJZ^T \\ 0&E&-JZ^T \\ 0&0&-1
\end{matrix}
\right)
,$$ 
where $E$ is the identity matrix of the rank $p+q$, and $Z \in \R^{p+q*}$ can be arbitrary. In particular, all conformal symmetries $s_Z$ are involutive and there exists an infinite number of symmetries at each point $hP$ of $PO(p+1,q+1)/P$ given by matrices of the form $hs_Zh^{-1}$ for all $Z \in \R^{p+q*}$.

The conformal geometry is called \emph{(conformally) locally flat}, if $p+q>3$ and $W=0$, or $p+q=3$ and $C=0$. In particular, it is locally flat if and only if it is locally isomorphic to the flat model.

 We will see that conformal symmetries on conformal geometries such that $W\neq 0$ have very different properties than the symmetries on flat models.

\section{Distinguished connections}
Suppose we have a conformal structure $(M,[g])$ and suppose there is a conformal symmetry $S_x$ at $x$. It is well known that there always is a class of torsion--free affine connections preserving the conformal structure $[g]$, the so--called \emph{Weyl connections}, see \cite[Section 1.6.]{parabook}. Arbitrary two Weyl connections $\na$ and $\hat \na$ from the class are related by 
\begin{align} \label{rozdil}
\hat \na_\xi(\eta)=\na_\xi(\eta)+
\U(\xi)\eta+\U(\eta)\xi-g(\xi, \eta)g^{-1}(\U)
\end{align} 
for suitable $\U \in \Omega^1(M)$, where $g$ is an 
arbitrary metric from the conformal class. 
In particular, if $S_x$ is a conformal symmetry and $\na$ is a Weyl connection, then $S_x^*\na$ is a Weyl connection, too. In other words, there exists $\U \in \Omega^1(M)$ such that 
\begin{align} \label{zmena}
(S_x^*\na)_\xi(\eta)=\na_\xi(\eta)+
\U(\xi)\eta+\U(\eta)\xi-g(\xi, \eta)g^{-1}(\U),
\end{align}
and this holds locally in the case of local symmetries.

In fact, the linear map $\R^{p+q}\to \R^{p+q}$ given by
\begin{align} \label{alg}
(\U(\xi)(-)+\U(-)(\xi)-g(\xi,-)g^{-1}(\U))(x)
\end{align} 
naturally provides an element  $\xi(\U)$ of $\frak{co}(p,q)$ with the same action on $\R^{p+q}$ for each $x$. In particular, the evaluation of the formula in $\xi$ is a linear map $\xi(-): \R^{p+q*}\to \frak{co}(p,q)$ at each $x$. We recall that $\xi(\U)=0$ for all $\xi\in \R^{p+q}$ if and only if $\U=0$.

Suppose there is a (local) conformal symmetry $S_x$ at $x$ on $M$. It is proved in \cite{ja-elsevier} that there always is a Weyl connection $\na^{S_x}$ satisfying $S_x^*\na^{S_x}=\na^{S_x}$. The `difference' $\U$ between $\na^{S_x}$ and any other Weyl connection $\nabla$ satisfying $S_x^*\na=\na$ given by the formula (\ref{rozdil}) vanishes at $x$, because $\U(x)$ is $S_x$--invariant tensor of odd type and thus vanishes at $x$. Moreover, this implies for the Weyl tensor $W$ that $\na W$ vanishes at $x$ for any Weyl connection satisfying $S_x^*\na=\na$, because $(\na W)(x)$ is a tensor of type odd invariant with respect to $S_x$.

Let us remark that in the case $p+q=3$, the Weyl tensor vanishes identically and the Cotton--York tensor $C$ vanishes at the points with conformal symmetries, since it is an invariant tensor of odd. In particular, if the geometry is conformally symmetric, then the geometry is locally flat.

Assume there is a smooth system of conformal symmetries $S$ on the conformal manifold $M$. For each such system $S$, there can exist at most one connection $\na^S$ which is invariant with respect to $S$, i.e. such that $S_x^*\na^S=\na^S$ for all $x \in M$. To get this uniquely given candidate for the invariant connection $\na^S$, one simply glues pointwise the above connections $\na^{S_x}$ for all $x$, see \cite{ja-springer} for details. The connection $\na^S$ always satisfies $\na^S W =0$ on $M$. However, $\na^S$ need not to be invariant and it is proved in \cite{ja-springer} that this is the case if and only if $S_x \circ S_y \circ S_x=S_{S_x(y)}$ holds for each $x, y \in M$. In such situation, $(M,S)$ is an symmetric space.

Let us now focus on the question, when the condition $S_x \circ S_y \circ S_x=S_{S_x(y)}$ is satisfied? There are some partial results in \cite{ja-elsevier}. We present here a particular improvement of the results, which includes the answer to our question.

\begin{prop*} \label{dve-symetrie}
Suppose there are two different symmetries at $x$ on the conformal structure $(M,[g])$. Then the Weyl tensor $W$ vanishes at $x$.
\end{prop*}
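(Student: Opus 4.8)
The plan is as follows. Since for $p+q=3$ one has $W\equiv 0$ and there is nothing to prove, assume $p+q\geq 4$. From the two distinct symmetries I will build a nontrivial (local) conformal transformation $\va$ that fixes $x$ with $T_x\va=\id$; differentiating the conformal invariance $\va^*W=W$ at $x$ will show that $W(x)$ is annihilated by the $(p+q)$--dimensional family of elements of $\frak{co}(p,q)$ determined by the second jet of $\va$; and a fibrewise algebraic argument --- the \emph{delicate} point --- will then force $W(x)=0$.

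First I would note that every conformal symmetry is involutive: by \cite{ja-elsevier} there is a Weyl connection $\na^{S_x}$ with $S_x^*\na^{S_x}=\na^{S_x}$, so $S_x^2$ is a conformal transformation fixing $x$ with $T_x(S_x^2)=\id$ that preserves the torsion--free affine connection $\na^{S_x}$; as an affine transformation is determined by its $1$--jet at a fixed point, $S_x^2=\id$ near $x$. Hence, given two different symmetries $S_x\neq S'_x$ at $x$, the map $\va:=S_x\circ S'_x$ is a (local) conformal transformation with $\va(x)=x$, $T_x\va=\id$, and $\va\neq\id$ (otherwise $S_x=(S'_x)^{-1}=S'_x$).

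Next, fix any Weyl connection $\na$. As $\va$ is conformal, $\va^*\na$ is again a Weyl connection, so $(\va^*\na)_\xi\eta=\na_\xi\eta+\U(\xi)\eta+\U(\eta)\xi-g(\xi,\eta)g^{-1}(\U)$ for some $\U\in\Om^1(M)$. In $\na$--normal coordinates centred at $x$ the difference tensor $\va^*\na-\na$ at $x$ is exactly the Hessian of $\va$ at $x$, which is therefore the Weyl--type tensor built from $\U(x)$; since a (local) conformal transformation is determined by its $2$--jet at a point (a standard property of the $|1|$--graded conformal Cartan geometry, see \cite{parabook}), $\va\neq\id$ together with $T_x\va=\id$ forces $\U(x)\neq 0$. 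Now differentiate $\va^*W=W$ (the Weyl tensor is a conformal invariant) using naturality of the covariant derivative, $\va^*(\na W)=(\va^*\na)(\va^*W)=(\va^*\na)W$, and compare with $\na W$. Since $\va(x)=x$ and $T_x\va=\id$, the genuine covariant--derivative terms on the two sides coincide at $x$, so only the algebraic contribution of the difference tensor survives there, giving
\begin{equation*}
\bigl(\xi(\U(x))\bigr)\cdot W(x)=0\qquad\text{for all }\xi\in T_xM,
\end{equation*}
where $\xi(\U(x))\in\frak{co}(p,q)$ is the element from (\ref{alg}) and $\cdot$ denotes the natural action of $\frak{co}(p,q)\subseteq\frak{gl}(p+q)$ on tensors of the type of $W$.

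It remains to deduce $W(x)=0$. Put $Z:=\U(x)\neq 0$. One checks that the $\xi(Z)$ are exactly the brackets $[\xi,Z]\in\fg_0$ of $\fg_{-1}$--elements with $Z\in\fg_1$ inside $\frak{so}(p+1,q+1)$, so they span a subspace $\mathfrak{a}\subseteq\fg_0$ which annihilates $W(x)$. If $Z$ is non--null (in particular always, in the Riemannian case) then $[Z^\sharp,Z]=g(Z^\sharp,Z^\sharp)\cdot E$ is a nonzero multiple of the grading element $E$ of $\fg_0$, and $E$ acts on the space of Weyl tensors by a nonzero scalar; hence $W(x)=0$. If $Z$ is null, $\mathfrak{a}$ is the solvable algebra spanned by a single semisimple element $h=E+N$ (with $h\cdot W(x)=0$) and an abelian nilradical of ``raising'' endomorphisms, and one concludes $W(x)=0$ from the structure of the conformal Weyl module. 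This last, representation--theoretic step --- above all the null case --- is the main obstacle; it can be settled by a highest--weight computation, using Kostant's description of $H^2(\fg_-,\fg)$ for the conformal $|1|$--grading.
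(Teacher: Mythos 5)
Your proposal is correct and reaches the paper's key identity --- a nonzero $\U(x)\in\R^{p+q*}$ with $\xi(\U(x))\cdot W(x)=0$ for all $\xi$ --- by a genuinely different mechanism. The paper takes the two invariant Weyl connections $\na^{S_x}$, $\na^{S_x'}$ (each of which kills $\na W$ at $x$), and reads off the identity from the difference formula (\ref{rozdil}) applied to $W$; you instead compose the two symmetries to produce a nontrivial conformal map $\va=S_x\circ S_x'$ with a higher--order fixed point at $x$ and differentiate $\va^*W=W$. This is precisely the viewpoint of \cite{CM} that the paper itself mentions as an alternative in its Lemma, so the two arguments are close cousins; yours has the small advantage of making the nonvanishing of $\U(x)$ transparent (it is the $\exp(\fg_1)$--part of the $2$--jet of $\va\neq\id$), a point the paper passes over rather quickly, at the cost of having to justify involutivity of symmetries and $2$--jet determinacy first. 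Both proofs then rest on the same algebraic fact, that the first prolongation of the annihilator of a nonzero Weyl tensor is trivial: you settle the non--null case cleanly via the grading element, and you are right that the null case is where the real content lies --- but note that the paper does not prove it either, deferring exactly this statement to \cite{KT} and \cite{CM}. So your proof is at the same level of completeness as the paper's; if you want a self--contained argument, the missing piece is the weight/highest--weight computation you sketch for null $\U(x)$.
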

\begin{proof}
We follow here \cite[Theorem 4.5.]{ja-elsevier}.
Consider two different symmetries $S_x$ and $S'_x$ at $x$.
Then there are two different Weyl connections such that $(S_x)^*\na^{S_x}=\na^{S_x}$ and $(S_x')^*\na^{S_x'}=\na^{S_x'}$ hold on a neighborhood of $x$, and thus $\na^{S_x} W=0$ and $\na^{S_x'} W=0$ at $x$. Moreover, there is $\U\neq 0$ given by (\ref{rozdil}) that relates directly the two derivatives $\na^{S_x}$ and $\na^{S_x'}$. If we apply this on the Weyl tensor $W$, we get that the element $\xi(\U)$ of $\frak{co}(p,q)$ given by (\ref{alg})
has to act trivially on $W(x)$ for all vectors $\xi$.  
We show in the following Lemma that this always implies vanishing of $W(x)$.
%This means that $\U(\xi)\in Ann(W_x)^{(1)}$ and the claim follows from Lemma \ref{ann-trivial}.
\end{proof}

Let us denote by $\Ann(W_x)$ the set of all $A\in \frak{co}(p,q)$ such that $A$ acts trivially on $W(x)$. We need to show that the set  
$$\Ann(W_x)^{(1)}=\{\U : \xi(\U)\in \Ann(W_x)\ \rm{for\ all} \ \xi\in \R^{p+q}\}$$ is trivial if $\Ann(W_x)\neq \frak{co}(p,q)$.

\begin{lemma*} \label{ann-trivial}
If $W(x)$ is non--trivial, then $\Ann(W_x)^{(1)}=0$.
\end{lemma*}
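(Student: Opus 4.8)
The plan is to recognise the linear map $\xi(\U)$ of \eqref{alg} as, up to sign, the Lie bracket in $\frak g:=\frak{so}(p+1,q+1)$ and then to exploit the action of the grading element. Recall that $\frak g$ carries the conformal $|1|$--grading $\frak g=\frak g_{-1}\oplus\frak g_0\oplus\frak g_1$ with $\frak g_{-1}\cong\R^{p+q}$, $\frak g_0=\frak{co}(p,q)$, $\frak g_1\cong\R^{p+q*}$, and that for $\xi\in\frak g_{-1}$ and $\U\in\frak g_1$ the bracket $[\U,\xi]\in\frak g_0$ acts on $\R^{p+q}$ precisely by the formula \eqref{alg}; so $\U\in\Ann(W_x)^{(1)}$ says exactly that $[\U,\xi]\cdot W(x)=0$ for all $\xi\in\frak g_{-1}$. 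The one structural fact I will use is that the identity endomorphism $\mathrm{id}\in\frak{co}(p,q)$, which spans the centre of $\frak g_0$, acts on $W(x)$ by the scalar $-2$ --- equivalently, $W$ is a tensor of type $(1,3)$ of conformal weight $0$, equivalently $W(x)$ has homogeneity $2$ as a harmonic curvature --- so that $\mathrm{id}\notin\Ann(W_x)$ whenever $W(x)\neq0$. Assume now $W(x)\neq0$ and suppose, for contradiction, that some $\U\neq0$ lies in $\Ann(W_x)^{(1)}$; I distinguish two cases according to whether $g^{-1}(\U)$ is non--isotropic or isotropic.

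If $g^{-1}(\U)$ is non--isotropic, substitute $\xi=g^{-1}(\U)$ into \eqref{alg}: the terms $\U(v)\xi$ and $-g(\xi,v)g^{-1}(\U)$ cancel and one is left with $\xi(\U)=g\big(g^{-1}(\U),g^{-1}(\U)\big)\,\mathrm{id}$, a non--zero multiple of $\mathrm{id}$. Since it lies in $\Ann(W_x)$ we get $\mathrm{id}\in\Ann(W_x)$ and hence $W(x)=0$, a contradiction. This already disposes of the positive definite case, in which every non--zero $\U$ is of this kind.

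If $g^{-1}(\U)$ is isotropic, choose a Witt basis $e_0,e_2,\dots,e_{p+q-1},e_\infty$ of $\R^{p+q}$ with $g^{-1}(\U)=e_0$, with $e_\infty$ isotropic, $g(e_0,e_\infty)=1$ and $e_2,\dots,e_{p+q-1}$ orthonormal spanning $\langle e_0,e_\infty\rangle^{\perp}$; write $u\wedge v\in\frak{so}(p,q)$ for the skew endomorphism $w\mapsto g(u,w)v-g(v,w)u$. A direct evaluation of \eqref{alg} yields $\xi(\U)=0$ for $\xi\in\langle e_0\rangle$, $\xi(\U)=e_0\wedge\xi$ for $\xi\in\langle e_2,\dots,e_{p+q-1}\rangle$, and $e_\infty(\U)=\mathrm{id}-H$ with $H:=e_\infty\wedge e_0$. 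Here the $e_0\wedge e_i$ span the abelian nilradical $\frak n$ of the parabolic subalgebra of $\frak{so}(p,q)$ stabilising the isotropic line $\langle e_0\rangle$, and $H$ is the corresponding grading element, with $[H,\frak n]=\frak n$. Hence $\U\in\Ann(W_x)^{(1)}$ forces both $\frak n\cdot W(x)=0$ and $H\cdot W(x)=\mathrm{id}\cdot W(x)=-2\,W(x)$.

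What remains --- the heart of the argument, and the one place where trace--freeness of $W$ is used in an essential way --- is that no non--zero algebraic Weyl tensor can satisfy both of these. The algebraic Weyl tensors form the irreducible $\frak{so}(p,q)$--module $\mathbb W$ of highest weight $2\varpi_2$ when $p+q\geq5$ (for $p+q=4$ one splits $\mathbb W$ into its self-- and anti--self--dual parts and argues on each summand). By Kostant's theorem the space $\mathbb W^{\frak n}$ of $\frak n$--invariants is the irreducible module over the Levi factor generated by the highest weight vector of $\mathbb W$; in particular it lies in a single eigenspace of the grading element $H$, and since it contains that highest weight vector the eigenvalue is the value of $H$ on $2\varpi_2$, namely $+2$. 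Since $+2\neq-2$, the conditions $\frak n\cdot W(x)=0$ and $H\cdot W(x)=-2W(x)$ are compatible only if $W(x)=0$, again a contradiction. Therefore $\Ann(W_x)^{(1)}=0$. The delicate points to get right are precisely the two sign normalisations --- that $\mathrm{id}$ acts on $W(x)$ by $-2$, and that $H$ acts on $\mathbb W^{\frak n}$ by $+2$ --- whose mismatch produces the contradiction.
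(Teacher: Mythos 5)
Your argument is correct, and it is genuinely more self-contained than what the paper offers: the paper's ``proof'' of this lemma is a pointer to the general prolongation-of-annihilator results of Kruglikov--The and to the direct computations of \v Cap--Melnick, whereas you actually carry out a version of the latter. Your identification of $\xi(\U)$ with the bracket $[\U,\xi]\in\frak g_0$ is the right starting point, and the case split on the isotropy of $g^{-1}(\U)$ is exactly where the content lies: the non--isotropic case reduces to the (correct) observation that $\xi(\U)$ for $\xi=g^{-1}(\U)$ is a non--zero multiple of $\mathrm{id}$, which cannot annihilate a non--zero weighted tensor; the isotropic case reduces to the incompatibility of $\frak n$--invariance with the $H$--eigenvalue $-2$, which you settle by the $H^0$ part of Kostant's theorem (one should complexify to apply it, but the eigenvalue conclusion descends to the real module, and your remark on splitting $\mathbb W$ into self-- and anti--self--dual parts correctly handles $p+q=4$; for $p+q=3$ the statement is vacuous since $W\equiv 0$). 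I checked the two sign normalisations you flag --- $\mathrm{id}\cdot W=-2W$ for the $(1,3)$--type Weyl tensor, and $H$ acting by $\langle 2\varpi_2,H\rangle=+2$ on $\mathbb W^{\frak n}$ --- and both are right; note also that the argument is robust to the $(1,3)$ versus $(0,4)$ convention, since the mismatch would then be $-4$ against $+2$. What the paper's route buys is generality (the vanishing of the first prolongation of the annihilator of a non--zero harmonic curvature holds for all parabolic geometries); what yours buys is a transparent, checkable computation specific to the conformal case, which is arguably what a reader of this lemma would want to see.
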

\begin{proof}
The statement follows from the general theory of \cite{KT}. 
One can also use the ideas of \cite[Section 3.2]{CM}, where the authors compute directly the actions of suitable elements $\xi(\U)\in \mathfrak{co}(p,q)$ to get vanishing of the Weyl tensor as a consequence of the existence of automorphisms with higher order fixed points, which in our case correspond to the elements $\U$.
\end{proof}

There is the following direct consequence of the Proposition \ref{dve-symetrie}.
\begin{cor*} \label{existence}
Let $(M,[g])$ be a conformal structure and assume $W(x)\neq 0$.
\begin{enumerate}
\item There can exist at most one conformal symmetry at $x$ on $M$.
\item If there is a conformal symmetry $S_z$ at $z$ and $S_z(y)=x$, then the condition  $S_z \circ S_y \circ S_z=S_{S_z(y)}$ is trivially satisfied for $z$ and arbitrary conformal symmetry $S_y$ at $y$.
\end{enumerate}
\end{cor*}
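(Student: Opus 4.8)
The plan is to deduce both assertions directly from Proposition \ref{dve-symetrie} together with the structural facts about invariant Weyl connections recalled in Section 2. For part (1), I would argue by contradiction: if $S_x$ and $S_x'$ were two distinct conformal symmetries at $x$, then Proposition \ref{dve-symetrie} forces $W(x)=0$, contrary to the hypothesis $W(x)\neq 0$. Hence the conformal symmetry at $x$, if it exists, is unique. (If $p+q=3$ one replaces $W$ by the Cotton--York tensor $C$ and uses the remark that $C$ vanishes at points carrying a symmetry, but since the statement is phrased in terms of $W$ I would just note this case separately or assume $p+q>3$.)

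For part (2), the key observation is that $S_z \circ S_y \circ S_z$ is itself a conformal transformation: it is a composition of conformal maps, hence conformal. I would check that it satisfies the two defining properties of a conformal symmetry at the point $S_z(y)=x$. First, $(S_z\circ S_y\circ S_z)(x) = S_z(S_y(S_z(x)))$; since $S_z(x)=y$ would need $x$ in the image appropriately — more carefully, $S_z(S_z(y)) = y$ because $S_z$ is involutive, so $(S_z\circ S_y\circ S_z)(x)=S_z(S_y(y))=S_z(y)=x$, using $S_y(y)=y$. Second, the differential at $x$ is $T_yS_z \circ T_yS_y \circ T_xS_z$; since $T_yS_y=-\id$ and the two outer maps are mutually inverse isomorphisms $T_xM\leftrightarrow T_yM$, the composite is $-\id$ on $T_xM$. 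Therefore $S_z\circ S_y\circ S_z$ is a conformal symmetry at $x=S_z(y)$.

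Now I invoke part (1): since $W(x)\neq 0$, there is at most one conformal symmetry at $x$; but $S_{S_z(y)}$ is by definition a conformal symmetry at $x$, and we have just exhibited $S_z\circ S_y\circ S_z$ as another one. Uniqueness forces $S_z\circ S_y\circ S_z = S_{S_z(y)}$, which is exactly the required identity. I expect the only mildly delicate point to be the bookkeeping with basepoints and the use of involutivity of the symmetries at $x$, $y$, $z$ — in particular that $S_z$ is its own inverse, which is what makes the outer differentials cancel; everything else is a formal consequence of Proposition \ref{dve-symetrie}. Note also that the hypothesis only assumes a symmetry $S_z$ at $z$ exists (with $S_z(y)=x$) and $S_y$ an arbitrary symmetry at $y$; no symmetry at $x$ need be assumed a priori, since $S_{S_z(y)}$ is produced by the argument itself, and this is consistent because $W(x)\neq 0$ guarantees that this produced symmetry is the unique one.
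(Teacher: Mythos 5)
Your argument is correct and is essentially the ``direct consequence'' the paper intends (the paper offers no written proof): part (1) is the contrapositive of Proposition~\ref{dve-symetrie}, and part (2) follows by exhibiting $S_z\circ S_y\circ S_z$ as a conformal symmetry at $x=S_z(y)$ and invoking the uniqueness from part (1). The one step you lean on without justification is the involutivity of $S_z$: the definition of a conformal symmetry only prescribes the fixed point and the differential $-\id$, and $S_z^2=\id$ is exactly what you need both for $(S_z\circ S_y\circ S_z)(x)=x$ and for the cancellation of the two outer differentials, so it should not be taken for granted (the paper asserts involutivity only for the symmetries $s_Z$ of the flat model). It can be supplied from the paper's own toolkit: $S_z$ admits an invariant Weyl connection $\na^{S_z}$ near $z$, so $S_z^2$ is an affine transformation of $\na^{S_z}$ fixing $z$ with differential $+\id$ there, hence is the identity on its connected domain; alternatively, at any point where $W\neq 0$ uniqueness itself forces involutivity, since $S_z^{-1}$ is again a symmetry at $z$. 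Note that the hypothesis $W(x)\neq 0$ gives $W(y)\neq 0$ (by naturality of $W$ under the conformal map $S_z$) but says nothing about $W(z)$, so the second route alone does not cover the stated generality; the Weyl-connection argument does. With that point supplied, your bookkeeping with basepoints and differentials is correct.
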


Moreover, if $W(x)\neq 0$, then $W\neq 0$ on a neighborhood of $x$ and if there are conformal symmetries, then they are unique, too. 
%In our previous works, we needed the assumption that the symmetries wary smoothly on the neighborhood, see \cite{} but we can prove now, that this automatically holds.
We can prove that the corresponding system of conformal symmetries is smooth.

\begin{lemma*} \label{hladkost-system}
Let $(M,[g])$ be a conformal structure, let $U$ be an open subset of $M$ with non--trivial $W$ and suppose there is a conformal symmetry at each $x\in U$. Then the system $S$ assigning to $x$ the unique symmetry at $x$ is smooth map from $U$ to the automorphism group.
\end{lemma*}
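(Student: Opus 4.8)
The plan is to exhibit the system $S$ locally as a composition of smooth maps, exploiting the uniqueness established in Corollary \ref{existence}(1). First I would fix $x_0\in U$ and pass to the normal parabolic geometry $(\ba\to M,\om)$ of type $(PO(p+1,q+1),P)$ associated to $[g]$. Since a conformal symmetry at $x$ is in particular a local automorphism of this parabolic geometry fixing $x$ with $T_xS_x=-\id$, it lifts (after a choice of point in the fiber over $x$) to a local automorphism of $(\ba,\om)$, and such automorphisms are determined by their value at a single point of $\ba$. Concretely, a symmetry at $x$ corresponds to an element $s$ in the subgroup $P_+\subset P$ (the kernel of $P\to CO(p,q)$, here the unipotent part whose elements are the $s_Z$ composed with the element $-\id$-type factor) together with a frame $u\in\ba_x$, with two frames in the same fiber giving the same geometric map; the freedom is exactly the choice of $Z\in\R^{p+q*}$, i.e. the element $\U$ of the earlier discussion. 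The content of Corollary \ref{existence}(1) is that when $W(x)\neq0$ this $Z$ is \emph{uniquely pinned down} by the condition $\xi(\U)\in\Ann(W_x)$ for all $\xi$, since $\Ann(W_x)^{(1)}=0$.

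The key steps, in order: (1) Choose a local smooth section $\sigma:U'\to\ba$ of the bundle over a neighborhood $U'\ni x_0$; this identifies, over $U'$, the symmetry at $x$ with a pair (distinguished group element in $P$, which is $-\id$ on the $CO$-part, times $s_{Z(x)}$) where $Z(x)$ is the datum to be shown smooth. (2) Write down the condition that determines $Z(x)$: the Weyl connection $\na^{S_x}$ invariant under $S_x$ is characterized, among Weyl connections $\na$ with $S_x^*\na=\na$, by $\na^{S_x}W$ vanishing at $x$ \emph{to the appropriate order} — equivalently $Z(x)$ is the unique solution of the linear-algebraic system $\xi(\U_{Z(x)})\cdot W(x)=0$ for all $\xi$, whose coefficients are the components of $W(x)$ and of the chosen background Weyl connection read off through $\sigma$, hence depend smoothly on $x$. (3) Invoke that a linear system $A(x)\,v=b(x)$ with smooth $A(x),b(x)$ and a \emph{unique} solution $v(x)$ for every $x$ has smooth solution $v(x)=Z(x)$ (e.g. by Cramer's rule on a maximal nonvanishing minor, or by the implicit function theorem applied to the projection onto a complementary subspace). (4) Finally assemble: the map $x\mapsto S_x$ into the automorphism group is the composition $x\mapsto(\sigma(x),Z(x))\mapsto$ (the automorphism generated by that jet), and each arrow is smooth — the last because the passage from a point of $\ba$ and a group element to the generated local automorphism is smooth in the ($C^\infty$ or appropriate topology on the) automorphism group, this being a general feature of parabolic geometries.

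The main obstacle is step (2)–(3): making precise in what sense $Z(x)$ is cut out by a \emph{smooth} system of equations with a unique solution. The cleanest route is to use the earlier observation that the difference $\U$ between $\na^{S_x}$ and a fixed smoothly-varying reference family of $S_x$-invariant Weyl connections satisfies $\xi(\U)\in\Ann(W_x)$ for all $\xi$ — so $\U$ lies in $\Ann(W_x)^{(1)}$ relative to the reference, which is $\{0\}$ by Lemma \ref{ann-trivial}; but to get smoothness one wants not just ``$\U=0$'' but that the reference itself can be chosen smoothly \emph{and} that the defining equations for the genuine $\na^{S_x}$ form a smoothly-varying full-rank system. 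Here one has to be slightly careful that the rank of the relevant linear map $\U\mapsto(\xi\mapsto\xi(\U)\cdot W(x))$ may jump; however uniqueness of the solution for each $x\in U$ forces injectivity of this map at each $x$, and injectivity of a smooth family of linear maps between fixed finite-dimensional spaces is an open condition compatible with smooth solvability of the inhomogeneous problem on the (open, dense, and here all of $U$) locus where a solution exists. I would spell this out via a local smooth left inverse. Modulo this linear-algebra packaging, the rest is bookkeeping with the structure equations of the parabolic geometry.
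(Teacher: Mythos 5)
Your proposal is correct and follows essentially the same route as the paper: both recover the datum $\U(y)$ (your $Z(y)$) at each point from the smooth tensor $\na^{S_{x_0}}W$ for a fixed reference invariant Weyl connection, using that the algebraic action $\U\mapsto(\xi\mapsto\xi(\U)\cdot W(y))$ has kernel $\Ann(W_y)^{(1)}=0$ and hence admits a smoothly varying inverse on its image. Your additional care in steps (3)--(4) (the local smooth left inverse, and assembling the automorphism from the smooth pointwise data) only spells out what the paper's one-line conclusion leaves implicit.
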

\begin{proof}
Let us fix a Weyl connection $\nabla^{S_x}$ such that $(S_x)^*\nabla^{S_x}=\nabla^{S_x}$ for some $x\in U$. Then for each $y\in U$, there is $\U(y)$ describing the `difference' between $\nabla^{S_y}$ and $\nabla^{S_x}$ at $y$ by (\ref{rozdil}). Thus $\nabla^{S_x} W(y)$ is given by the algebraic action (\ref{alg}) of $\U(y)$ on $W$ for each $\xi\in \R^{p+q}$. To prove the claim, it is enough to show that $\U(y)$ depends smoothly on $y$. But $\nabla^{S_x} W(y)$ is smooth and thus the image of $\xi(\U(y))\in \frak{co}(p,q)$ depends smoothly on $y$ for each $\xi\in \R^{p+q}$. Since the kernel of the action coincides with $\Ann(W_y)^{(1)}$, we conclude that $\U(y)$ depends smoothly on $y$. 
\end{proof}

The Lemma particularly implies, that if $U$ consists of
all points with non--trivial $W$ and there is a conformal symmetry at each $x\in U$, then $U$ is closed in $M$, because the group generated by symmetries at points in $U$ acts transitively on $U$. Since $U$ is open and $M$ connected, it holds $U=M$. We know from Corollary \ref{existence} that conformal symmetries on $U$ satisfy the condition $S_x \circ S_y \circ S_x=S_{S_x(y)}$ and we can summarize the above results in the following Theorem.

\begin{thm*}
Suppose $(M,[g])$ is a conformally symmetric conformal structure with the Weyl tensor $W$. Then one of the following claims hold:

\begin{enumerate}
\item $W=0$,
\item $W\neq 0$ and $(M,S)$ is a symmetric space.
\end{enumerate}

In particular, each conformally symmetric conformal structure is either conformally homogeneous or locally flat.
\end{thm*}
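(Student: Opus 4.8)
The plan is to split the argument according to whether the Weyl tensor $W$ vanishes identically on $M$. If $W\equiv 0$ we are in case (1) and there is nothing more to prove; if $W$ is not identically zero, we will show that $W$ is in fact nowhere vanishing and that $(M,S)$ is a symmetric space, which is case (2). The final ``in particular'' clause will then follow from this dichotomy together with two elementary observations already recorded above: a conformal structure with $W\equiv 0$ is locally flat (by definition when $p+q>3$, and when $p+q=3$ because the Cotton--York tensor $C$ is an invariant tensor of odd type, hence vanishes at every point carrying a conformal symmetry, so on all of a conformally symmetric $M$); and a symmetric space $(M,S)$ is conformally homogeneous, since the group generated by the products $S_x\circ S_y$ consists of conformal automorphisms of $M$ and acts transitively.

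So assume $W\not\equiv 0$ and put $U=\{x\in M: W(x)\neq 0\}$, a non--empty open subset of $M$. By hypothesis there is a conformal symmetry at every point of $M$, in particular at every point of $U$; by the Corollary this symmetry is unique on $U$, and by the smoothness Lemma the induced system $S\colon U\to\Aut(M)$ is smooth. As explained in the paragraph following that Lemma, the group $G$ generated by the symmetries $S_x$, $x\in U$, preserves $U$ — because $W$ is a conformal invariant, so conformal automorphisms carry points with $W\neq 0$ to points with $W\neq 0$ — and acts transitively on $U$; hence $U$ is closed in $M$. Being also open and $M$ being connected, we conclude $U=M$. Consequently $W$ is nowhere zero on $M$, the conformal symmetry at each point is unique, and $S\colon M\to\Aut(M)$ is a smooth system of conformal symmetries.

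It remains to upgrade this smooth system to a symmetric space structure. By part (2) of the Corollary, for all $z,y\in M$ the identity $S_z\circ S_y\circ S_z=S_{S_z(y)}$ holds (with $x:=S_z(y)$, the hypothesis $W(x)\neq 0$ of the Corollary is now automatic, and uniqueness makes ``the'' symmetries unambiguous). By the discussion recalled in Section~2, this is precisely the condition under which the canonical connection $\na^S$ obtained by gluing the connections $\na^{S_x}$ is invariant, i.e.\ $S_x^*\na^S=\na^S$ for all $x$; since moreover $\na^S W=0$, the pair $(M,S)$ is a symmetric space, which is case (2). Combined with the observations of the first paragraph, this proves the theorem.

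The step carrying the real content is the closedness of $U$ — equivalently, the assertion that the Weyl tensor of a conformally symmetric conformal structure cannot vanish on a proper non--empty open part of $M$ only. Its justification rests on the transitivity on $U$ of the group generated by the (now unique, smoothly varying) symmetries at points of $U$, which in turn uses the smoothness Lemma together with the standard fact that the transvections $S_x\circ S_y$ of a smooth symmetry system generate a transitive group; this is the point where the analysis of \cite{ja-springer} is genuinely required. Everything else is immediate from the Proposition, the Corollary, the two Lemmas above, and the recollections of Sections~1 and~2.
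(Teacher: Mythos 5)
Your proposal is correct and follows essentially the same route as the paper: the paper's own argument is exactly the open--closed--connectedness argument on $U=\{x: W(x)\neq 0\}$, using the Corollary for uniqueness of symmetries and the identity $S_x\circ S_y\circ S_x=S_{S_x(y)}$, the smoothness Lemma, and the transitivity of the group generated by the symmetries to conclude $U=M$ and then invoke the criterion of \cite{ja-springer} for $(M,S)$ to be a symmetric space. Your write-up is in fact slightly more explicit than the paper's (which compresses this into the paragraph preceding the theorem), and it leaves the step ``transitivity on $U$ implies $U$ closed'' at the same level of detail as the original.
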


Since each non--flat conformal symmetric geometry is conformally homogeneous, the group $K$ of conformal transformations generated by the corresponding smooth system of symmetries $S$ acts transitively on $M$, and we get $M=K/H$, where $H \subset K$ is the stabilizer of a point. As usual, there is the canonical $\Ad_H$--invariant decomposition $\fk=\fh \oplus \mathfrak{m}$, and $s \in H$ such that $\fh$ is $1$--eigenspace of $\Ad_s$, and $\fm$ is $-1$--eigenspace of $\Ad_s$. In other words, we have the  homogeneous symmetric space $(K,H,s)=(M,S)$ for the group $K$ generated by all symmetries.

%In the next, we show that there are flat symmetric conformal geometries that are not affine symmetric spaces. Further, we show that the homogeneous symmetric conformal geometries are even pseudo--Riemanian symmetric spaces.

\section{Symmetries of homogeneous conformal geometries}
Let $(\ba \to M, \om)$ be a normal homogeneous conformal geometry. For an arbitrary transitive group $K$ of conformal transformations and a stabilizer $H \subset K$, the choice of $u \in \ba$ from the fiber over $eH \in K/H \simeq M$ provides an inclusion $j: K \rightarrow \ba$ of the form $j(k)=k(u)$, and Lie group homomorphism $i:H \rightarrow P$ of the form $h(u)=u\cdot i(h)$. Then $\omega(u)\circ T_{e}j$  defines a linear map $\alpha: \mathfrak{k}\to \mathfrak{\frak{so}}(p+1,q+1)$ satisfying the following conditions:
\begin{enumerate}
\item $\alpha: \fk \to \mathfrak{so}(p+1,q+1)$ is linear map extending $T_ei:\mathfrak{h}\to \mathfrak{p}$,
\item $\alpha$ induces an isomorphism $\alpha: \mathfrak{k}/\mathfrak{h} \rightarrow \mathfrak{so}(p+1,q+1)/\mathfrak{p}$ of vector spaces, 
\item $\Ad(i(h))\circ \alpha=\alpha \circ \Ad(h)$ holds for all $h\in H$.
\end{enumerate}

Then there is $P$--bundle isomorphism $\mathcal{G} \simeq K\times_{i(H)} P$ such that $u=\lz e,e \pz$, and the Cartan connection $\om$ can be described as $\omega(j(k))(X,A)=\alpha \circ \omega_K(k)(X)+\omega_P(e)(A)$ for $X \in \frak{so}(p+1,q+1)$ and $A \in \frak{p}$, where $\om_G$ denotes the Maurer--Cartan form on the Lie group $G$.
Let us recall that such pair of maps $(\alpha,i)$ is called an \emph{extension} of $(K,H)$ to $(PO(p+1,q+1),P)$, see \cite[Section 1.5.15]{parabook}. The curvature $\kappa$ is completely determined by $\kappa(u)(\alpha(X),\alpha(Y))=[\alpha(X),\alpha(Y)]-\alpha([X,Y])$ for $X,Y\in \fk/\fh$.

Let us recall results on (local) existence of symmetries on a homogeneous conformal geometry following from \cite[Section 2]{GZ-Lie}.

\begin{prop*}\label{existence-2}
Let  $(\alpha,i)$ be an extension of $(K,H)$ to $(PO(p+1,q+1),P)$. 
\begin{enumerate}
\item If $\kappa=0$, then there is locally defined symmetry at each point.
\item If $\kappa\neq 0$, then there is a (unique) local symmetry at each point preserving $\fk$ if and only if there is $Z\in \R^{p+q*}$ such that $\Ad_{\exp Z} \alpha(\fk)$ is preserved by $\Ad_{s_0}$, 
where $s_0$ is the symmetry of the flat model as described in the Section \ref{prvni-cast} for $Z=0$.

In particular, if $\fk$ coincides with the algebra of all complete conformal Killing fields, then there is a (unique) local symmetry at each point if and only if there is $Z\in \R^{p+q*}$ such that $\Ad_{\exp Z} \alpha(\fk)$ is preserved by $\Ad_{s_0}$.
\end{enumerate}
\end{prop*}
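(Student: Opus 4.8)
The plan is to dispatch the flat case directly and, in the curved case, to set up a dictionary between local conformal symmetries at the base point $x_0:=eH$ and the symmetry elements $s_Z$ of the flat model, and then to read the condition ``preserving $\fk$'' off algebraically inside $\mathfrak{g}:=\mathfrak{so}(p+1,q+1)$, equipped with its $|1|$--grading $\mathfrak{g}=\mathfrak{g}_{-1}\oplus\mathfrak{g}_0\oplus\mathfrak{g}_1$ (so $\mathfrak{g}_{-1}\cong\R^{p+q}$, $\mathfrak{g}_0\cong\mathfrak{co}(p,q)$, $\mathfrak{g}_1\cong\R^{p+q*}$). For~(1): if $\kappa=0$ then $\kappa(u)(\alpha(X),\alpha(Y))=[\alpha(X),\alpha(Y)]-\alpha([X,Y])=0$, so the Cartan curvature vanishes and the geometry is locally isomorphic to the flat model, which by Section~\ref{prvni-cast} admits conformal symmetries $h\,s_0\,h^{-1}$ at every point; pulling these back along local isomorphisms produces a locally defined symmetry at each point of $M$.

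For~(2) I would first reduce everything to the point $x_0$. Since $\kappa\neq0$ and the geometry is homogeneous, the Weyl tensor is non--zero at every point when $p+q>3$, so by Corollary~\ref{existence} there is at most one conformal symmetry at each point --- this accounts for the word ``unique'' --- while for $p+q=3$ the Cotton--York tensor is non--zero everywhere, hence there is no conformal symmetry at any point (by the observation recalled above that it vanishes wherever a conformal symmetry exists), and then no admissible $Z$ can exist either, so both sides of the equivalence are false. Next, a local symmetry $S_{x_0}$ at $x_0$ preserving $\fk$ is carried by the conformal transformations in $K$ to the (unique) local symmetry at every point, still preserving $\fk$ since $K$ normalises $\fk$; conversely it plainly suffices to exhibit a symmetry at $x_0$. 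So (2) reduces to: there is a local conformal symmetry at $x_0$ preserving $\fk$ if and only if there is $Z$ with $\Ad_{\exp Z}\alpha(\fk)$ preserved by $\Ad_{s_0}$.

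Now the dictionary. A local conformal symmetry $S_{x_0}$ lifts uniquely to a local automorphism $\Phi$ of $(\ba\to M,\om)$; covering a map fixing $x_0$, it preserves the fibre over $x_0$, so $\Phi(u)=u\cdot g$ for a unique $g\in P$. The condition $T_{x_0}S_{x_0}=-\id$ translates into $\Ad_g=-\id$ on $\mathfrak{g}/\fp\cong\mathfrak{g}_{-1}$; as this action factors through $G_0=CO(p,q)$ and a short computation shows $s_0$ is the only element of $G_0$ acting by $-\id$ on $\mathfrak{g}_{-1}$, we get $g\in s_0\exp(\mathfrak{g}_1)$, i.e.\ $g=s_Z$ for some $Z\in\R^{p+q*}$ (and then $\Phi^2=\id$ automatically since $s_Z^2=e$). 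From $\om(u)(\zeta_X(u))=\alpha(X)$ for the fundamental vector field $\zeta_X$ of $X\in\fk$ and $P$--equivariance of $\om$ one reads off $\om(u\cdot s_Z)(\zeta_Y(u\cdot s_Z))=\Ad_{s_Z^{-1}}(\alpha(Y))$, so $\Phi_*\zeta_X=\zeta_Y$ amounts to $\alpha(Y)=\Ad_{s_Z}(\alpha(X))$; hence $\Phi$ preserves $\fk$ --- i.e.\ conjugates the fundamental fields of the $K$--action among themselves --- precisely when $\Ad_{s_Z}$ preserves $\alpha(\fk)$. Finally $\Ad_{s_0}=-\id$ on $\mathfrak{g}_1$ and commutativity of $\exp(\mathfrak{g}_1)$ give the identity $s_Z=\exp(-\tfrac12Z)\,s_0\,\exp(\tfrac12Z)$, which turns ``$\Ad_{s_Z}\alpha(\fk)=\alpha(\fk)$ for some $Z$'' into ``$\Ad_{s_0}$ preserves $\Ad_{\exp Z'}\alpha(\fk)$ for some $Z'$'' (take $Z'=\tfrac12Z$, a bijection of $\R^{p+q*}$). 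Together with the reduction this proves the ``only if'' direction.

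For the ``if'' direction, given such a $Z$ one has $\Ad_{s_Z}\alpha(\fk)=\alpha(\fk)$ and, since $s_Z\in P$, also $\Ad_{s_Z}\fp=\fp$, hence $\Ad_{s_Z}(\alpha(\fk)\cap\fp)=\alpha(\fk)\cap\fp=\alpha(\fh)$; the theory of morphisms of extensions from \cite{GZ-Lie} then yields a local automorphism $\Psi$ of $(K,H)$ with $\Ad_{s_Z}\circ\alpha=\alpha\circ d\Psi$, whence $\Phi(\lz k,p\pz):=\lz\Psi(k),s_Z\,p\pz$ is a local automorphism of $\ba\simeq K\times_{i(H)}P$ with $\Phi^*\om=\om$, covering the conformal symmetry $kH\mapsto\Psi(k)H$ at $x_0$, which preserves $\fk$; transporting by $K$ finishes (2). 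For the closing ``in particular'', one uses that when $\fk$ is the algebra of all complete conformal Killing fields a local symmetry necessarily preserves $\fk$ (this is established in \cite{GZ-Lie}, $\fk$ being intrinsic to the homogeneous geometry and the fundamental fields $\zeta_X$ complete), so the clause ``preserving $\fk$'' is automatic and (2) gives the stated equivalence. I expect the main obstacle to be this converse construction: the well--definedness of $\Phi$ on $K\times_{i(H)}P$ requires the group--level intertwining $s_Z\,i(h)=i(\Psi(h))\,s_Z$ (automatic infinitesimally but needing $H$ connected, or a separate check on components), and $\Phi^*\om=\om$ must be verified against $\om(j(k))(X,A)=\alpha(\om_K(k)(X))+\om_P(e)(A)$; the extension--morphism formalism of \cite{GZ-Lie} is precisely what packages these, so I would invoke it rather than recompute.
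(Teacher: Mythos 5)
The paper offers no proof of this proposition at all---it is quoted from \cite{GZ-Lie}---so the comparison is really between your reconstruction and the strategy of that reference, which you do follow: part (1), the identification of the possible lifts $\Phi(u)=u\cdot g$ with $g=s_Z\in s_0\exp(\mathfrak{g}_1)$, the translation of ``preserving $\fk$'' into $\Ad_{s_Z}\alpha(\fk)=\alpha(\fk)$ via $P$--equivariance of $\om$ and the fact that infinitesimal automorphisms are determined by their value at one point, and the identity $s_Z=\exp(-\tfrac12 Z)\,s_0\exp(\tfrac12 Z)$ reconciling this with the stated form of the condition, are all correct; so is the uniqueness discussion via the Corollary (and the vacuous treatment of $p+q=3$, modulo the remark below).

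The genuine gap is in the ``if'' direction. To define $\Phi$ on $K\times_{i(H)}P$ you need a local automorphism $\Psi$ of $(K,H)$ with $\Ad_{s_Z}\circ\alpha=\alpha\circ T_e\Psi$, hence first of all a \emph{Lie algebra} automorphism $\varphi=\alpha^{-1}\circ\Ad_{s_Z}\circ\alpha$ of $\fk$. The hypothesis $\Ad_{s_Z}\alpha(\fk)=\alpha(\fk)$ only produces a linear involution of $\fk$ preserving $\fh$; compatibility with the bracket is an additional condition, equivalent to invariance of the curvature under $s_Z$, since
\[
\alpha\bigl(\varphi([X,Y])\bigr)-\alpha\bigl([\varphi X,\varphi Y]\bigr)=\kappa(u)\bigl(\alpha(\varphi X),\alpha(\varphi Y)\bigr)-\Ad_{s_Z}\,\kappa(u)\bigl(\alpha(X),\alpha(Y)\bigr),
\]
where $\kappa(u)(\alpha X,\alpha Y)=[\alpha X,\alpha Y]-\alpha([X,Y])$. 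Because the morphism--of--extensions formalism of \cite{GZ-Lie} takes a homomorphism of the pair $(K,H)$ as its \emph{input}, invoking it does not supply this step; showing that mere subspace invariance forces curvature invariance (or verifying the latter directly) is exactly the nontrivial content of the cited Section 2 in the curved case. You flag the group--level issues (components of $H$, checking $\Phi^*\om=\om$) as the main obstacle, but the more basic question of why $\varphi$ respects brackets is not addressed. Note also that your assertion that for $p+q=3$ and $\ka\neq0$ no admissible $Z$ can exist relies on the completed ``if'' direction, so it inherits this gap; and the closing claim that a local symmetry automatically preserves the algebra of complete conformal Killing fields is again delegated to \cite{GZ-Lie} rather than argued.
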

%\begin{proof}
%The result follows from discussions in \cite{} 
%\end{proof}

Suppose that $(\ba \to M, \om)$ is a $K$--homogeneous conformal geometry on homogeneous symmetric space $(K,H,s)$, where $K$ is the group generated by all symmetries. Then it is proved in \cite[Proposition 2.3]{HG-CEJM} that the geometry can be described by the extension $(\alpha,i)$ of $(K,H)$ to $(PO(p+1,q+1),P)$ which particularly satisfies $i(H) \subset CO(p,q)$ and $i(s)=s_0$.
Precisely, it is proved in \cite[Proposition 2.5]{HG-CEJM} that there is a bijection between:
\begin{itemize}
\item extensions of $(K,H)$ to  a conformal structure $(PO(p+1,q+1),P)$ such that $i(s) = s_0$, and  
\item couples $(\beta, b)$, where $\beta$ is a frame of $\fm$ such that the inclusion $i_\beta: H \to GL(\R^{p+q})$ induced by the frame $\beta$ is contained in $CO(p,q)$, and $b$ is an endomorphism of $\R^{p+q*}$ commuting with $i_\beta(\Ad(H))$. 
\end{itemize}
In fact, $i_\beta$ is $\Ad:H \to GL(\fm)$ written in the frame $\beta$ and the $b$--part plays no important role for now.

Such a pair $(\alpha,i)$ determines a reduction with respect to $CO(p,q) \to P$ of the homogeneous geometry $(\ba \to M,\om)$ to $K$--invariant $CO(p,q)$--structure over $M$. The reduction can be viewed as a $(K,CO(p,q))$--orbit of a suitable $u \in \ba$.

Moreover, there is a reduction with respect to $i(H) \to P$ of the homogeneous geometry $(\ba \to M,\om)$ to $K$--invariant $i(H)$--structure over $M$, which can be viewed as $(K,i(H))$--orbit of a suitable $u$. It follows from the reductivity of the pair $(K,H)$ that this reduction corresponds to the canonical invariant connection on the symmetric space. In the language of the bundle isomorphisms, we get $K\times_{i(H)} i(H) \subset   K\times_{i(H)} CO(p,q) \subset  K\times_{i(H)}P \simeq \mathcal{G}$. 

Clearly, the connection is the Levi--Civita connection of a metric from the conformal class if and only if $i(H) \subset O(p,q)$, i.e. if the above reduction is a reduction to $O(p,q)$--structure on $M$.

\begin{thm*}
Let $(\alpha,i)$ be the extension of $(K,H)$ to $(PO(p+1,q+1),P)$ for the homogeneous symmetric space $(K,H,s)$. Then there is $K$--invariant metric $g$ of signature $(p,q)$ on $K/H^0$, where $H^0$ is the component of identity of $H$.

In particular, each conformally symmetric conformal structure such that $W\neq 0$ is covered by a pseudo--Riemannian symmetric space, where the coving is a conformal map.
\end{thm*}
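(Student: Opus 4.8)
The plan is to reduce the theorem to the assertion that the isotropy representation of the connected subgroup $H^0\subseteq H$ takes values in $O(p,q)$. First I would fix an extension $(\alpha,i)$ with $i(s)=s_0$ and $i(H)\subseteq CO(p,q)$, with corresponding pair $(\beta,b)$, so that the isotropy representation $\Ad\colon H\to GL(\fm)$ written in the frame $\beta$ is $i_\beta\colon H\to CO(p,q)$. Restricting the reduction attached to $(\alpha,i)$ to $i(H^0)$ gives, as recalled, the canonical invariant connection $\na^S$ on $K/H^0$, a Weyl connection of the pulled back conformal structure whose holonomy algebra is $\ad(\fh)|_\fm\subseteq\mathfrak{co}(p,q)$. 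Since $K$ is generated by all the symmetries, its Lie algebra is the transvection algebra $\fm\oplus[\fm,\fm]$, so $\fh=[\fm,\fm]$ and $\ad(\fh)|_\fm$ is spanned by the curvature operators of $\na^S$ at the base point $o$; moreover $\na^S W=0$, and since $\na^S\Rho$ ($\Rho$ the Rho tensor of $\na^S$) is a tensor of odd type invariant under $S_o$, it vanishes at $o$ and hence, by homogeneity, everywhere, so the whole curvature of $\na^S$ is parallel.

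Next I would reformulate the problem: a $K$--invariant metric of signature $(p,q)$ on $K/H^0$ lying in the conformal class is the same as an $\Ad(H^0)$--invariant nondegenerate symmetric bilinear form on $\fm$. Such a form forces $i_\beta(H^0)$ to consist of trace--free endomorphisms, hence to lie in $\mathfrak{so}(p,q)$ (as it already lies in $\mathfrak{co}(p,q)$); conversely, if $i_\beta(H^0)\subseteq O(p,q)$ then the standard form is invariant and lies in the conformal class, and the reduction to $i(H^0)$ becomes a reduction to an $O(p,q)$--structure, so that $\na^S$ is its Levi--Civita connection. Thus the theorem reduces to $\ad(\fh)|_\fm\subseteq\mathfrak{so}(p,q)$, equivalently to the vanishing of the trace part of $\ad(\fh)|_\fm$, equivalently of the Faraday $2$--form $\mathsf{F}$ of $\na^S$ (the alternating part of $\Rho$, i.e.\ the curvature of the connection induced by $\na^S$ on the bundle of conformal densities).

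The hard part is this vanishing. The form $\mathsf{F}$ is $\na^S$--parallel, hence annihilated by $\ad(\fh)|_\fm$, and $W(o)\neq 0$ is a nonzero $\ad(\fh)|_\fm$--invariant algebraic Weyl tensor. For $X,Y\in\fm$ the curvature operator $R^{\na^S}(X,Y)\in\ad(\fh)|_\fm$ splits into its Weyl part $W(X,Y)$, which is trace--free, and the standard algebraic term in $\Rho$ and the conformal metric, whose scaling part is a fixed nonzero multiple of $\mathsf{F}(X,Y)$. Since $R^{\na^S}(X,Y)$ annihilates $W(o)$, pairing this with $W(o)$ via the $O(p,q)$--invariant bilinear form on algebraic Weyl tensors --- using that the $\mathfrak{so}(p,q)$--part acts skew--symmetrically while the scaling generator acts by a nonzero scalar --- gives $\mathsf{F}(X,Y)\,\langle W(o),W(o)\rangle=0$ for all $X,Y$, so $\mathsf{F}=0$ whenever $W(o)$ is non--null. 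The genuinely delicate case is a nonzero null Weyl tensor, where the norm argument is empty; there one must use that $\ad(\fh)|_\fm$ is not an arbitrary annihilator of $W(o)$ but the holonomy algebra of $\na^S$ --- spanned by curvature operators obeying the first Bianchi identity, with parallel curvature --- together with the triviality of the first prolongation $\Ann(W_o)^{(1)}$ from Lemma~\ref{ann-trivial}, and I would expect this step to require the structure results of \cite{KT} and \cite{CM} to exclude a surviving scaling component. This is the main obstacle.

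Granting $i(H^0)\subseteq O(p,q)$, the conclusion is formal: the reduction of $(\mathcal G\to M,\om)$ with respect to $i(H^0)\to P$ is a reduction to an $O(p,q)$--structure on $K/H^0$, i.e.\ a $K$--invariant metric $g$ of signature $(p,q)$ in the pulled back conformal class with $\na^S=\na^g$; since this connection has parallel curvature, $(K/H^0,g)$ is locally symmetric, and, being homogeneous, complete, so its universal cover is a pseudo--Riemannian symmetric space. The natural map $K/H^0\to K/H=M$ is a covering taking $[g]$ to the conformal class of $M$, hence conformal; composing with the universal covering if necessary exhibits $M$ as conformally covered by a pseudo--Riemannian symmetric space.
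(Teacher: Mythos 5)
Your reduction of the theorem to the statement $i(H^0)\subset O(p,q)$, i.e.\ to the vanishing of the scaling (trace) part of $\ad(\fh)|_{\fm}\subset\frak{co}(p,q)$, agrees with the paper, and your final paragraph deducing the conformal covering is fine. The genuine gap is exactly where you flag it: your pairing argument gives $\mathsf{F}(X,Y)\,\langle W(o),W(o)\rangle=0$ and hence only handles the case where $W(o)$ is non--null with respect to the invariant form on algebraic Weyl tensors. In signature $(p,q)$ with $p,q>0$ that form is indefinite, so a nonzero null $W(o)$ is not a degenerate corner case, and the appeal to \cite{KT} and \cite{CM} to exclude ``a surviving scaling component'' is a hope, not an argument --- Lemma~\ref{ann-trivial} concerns the first prolongation $\Ann(W_x)^{(1)}\subset\R^{p+q*}$, not the question of whether $\Ann(W_x)$ itself can contain an element with nonzero trace. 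As written, the proof is incomplete.

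The paper avoids all of this: the argument is purely algebraic and never uses $W$, the curvature of $\na^S$, or the Rho tensor. Since $K$ is generated by the symmetries, $[\fm,\fm]\subset\fh$ and the elements $\exp([X,Y])$ with $X,Y\in\fm$ generate $H^0$; their isotropy action in the frame $\beta$ is $\exp\bigl(\ad([X,Y])|_{\fm}\bigr)$, and $\ad([X,Y])=\ad_X\circ\ad_Y-\ad_Y\circ\ad_X$ is a commutator of endomorphisms, so its trace vanishes (the paper phrases this as $B(X,Y)-B(Y,X)=0$ via the symmetry of the Killing form). An element of $\frak{co}(p,q)$ with vanishing trace lies in $\frak{so}(p,q)$, whence $i(H^0)\subset O(p,q)$. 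The hypothesis $W\neq 0$ enters the theorem only through the earlier result guaranteeing that $(M,S)$ is a homogeneous symmetric space with an extension satisfying $i(H)\subset CO(p,q)$ and $i(s)=s_0$; once that is in hand, no curvature analysis is needed. You should replace your Faraday--form/pairing argument by this trace computation (taking care, as the paper does implicitly, that the relevant trace is that of $\ad([X,Y])$ restricted to $\fm$), which closes the null case for free.
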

\begin{proof}
Let $(\alpha,i)$ be the extension of $(K,H)$ to $(PO(p+1,q+1),P)$ described above. We need to show that $i(H^0) \subset O(p,q)$, because $(\alpha,i)$ naturally restricts to a extension of $(K,H^0)$ to $(G,P)$. This implies that the corresponding connection on $K/H^0$ is metric, and then the symmetric space $(K,H^0,s)$ is pseudo--Riemannian and the projection $K/H^0\to K/H$ is clearly a conformal map.

%The group $K$ is generated by elements $ksk^{-1}$ for $k\in K$ and $$det(\Ad(ksk^{-1}))=det(\Ad(s))=\pm 1$$ holds on $\fk$. Thus $det(\Ad(k))=det(\Ad(s))$ holds for all $k\in K$. The element $i(h)\in i(H)$ belongs to $O(p,q)$ if and only if the determinant of $\Ad(i(h))$ equals to $\pm 1$. Since $\alpha$ intertwines the $i(H)$--action on $\fg_{-1}$ and the $H$--action on the $-1$ eigenspace $\fm$, we get 
%$$det(\alpha^{-1}\circ \Ad(i(h))\circ \alpha)=det(\Ad(h)|_\fm)=det(\Ad(s)|_\fm)=\pm 1$$
%and the statement holds.
%Let $(\alpha,i)$ be the extension of $(K,H)$ to $(PO(p+1,q+1),P)$ described above. We need to show that $i(H) \subset O(p,q)$. This implies that the corresponding connection is metric, and then the symmetric space is pseudo--Riemannian.
%
Consider the canonical decomposition $\fk = \fh \oplus \fm$. Then $[\fm,\fm] \subset \fh$ holds, and thus $\exp([X,Y]) \in H^0$ holds for each $X,Y \in \fm$. Then the elements
$$i(\exp([X,Y]))=\exp(i'([X,Y]))$$ for all $X,Y \in \fm$ 
generate $H^0$. Thus $i=\Ad$ for the right choice of the base, and so we will study elements of the form $\exp(\ad([X,Y]))$ for all $X,Y \in \fm$. 

The element $\exp(\ad([X,Y]))$ belongs to $\frak{so}(p,q)$ if and only if its determinant equals to $1$, i.e. if and only if the trace of $\ad([X,Y])$ equals to $0$. But we have 
$\ad([X,Y])=\ad_X \circ \ad_Y - \ad_Y \circ \ad_X$ and the trace equals to
$$
tr(\ad([X,Y]))=tr(\ad_X \circ \ad_Y - \ad_Y \circ \ad_X)=B(X,Y)-B(Y,X)
,$$
where $B$ denotes the Killing from, which is symmetric. Then $i(H^0) \subset O(p,q)$ and the statement follows.
\end{proof}

\section{Locally flat conformal symmetric spaces}
On the locally flat geometries, there can exist many systems of conformal symmetries, because there can be many conformal symmetries at one point.
However, there are symmetric locally flat geometries that cannot form pseudo--Riemannian symmetric spaces.
We follow \cite{ja-CEJM} to construct such an example, where any system of conformal symmetries is not smooth.

\begin{exam}
Let us start with the flat model $PO(p+1,q+1)/P$, where $P$ is the stabilizer of the line $\langle e_0 \rangle$ generated by the first basis vector $e_0$ of the canonical basis. Assume $p,q>0$ and consider the locally flat manifold $M:=PO(p+1,q+1)/P - \{\langle u \rangle,\langle v\rangle\}$, where $u,v \in \R^{p+q+2}$ are arbitrary non--zero null vectors isotropic for $m$, i.e. $m(u,v)=0$. Then $M$ clearly is a conformal manifold and its group of conformal transformations $K(u,v)$ consists of those elements of $PO(p+1,q+1)$ that preserve $\langle u \rangle,\langle v\rangle$ or that swap them. Clearly, $K(u,v)$ has two connected components and the connected component of identity is the intersection of two parabolic subgroups of $PO(p+1,q+1)$ first of them preserving $\langle u \rangle$ and second of them preserving $\langle v \rangle$.

The group $K(u,v)$ does not act transitively on $M$, because the parabolic subgroup preserving arbitrary point $\langle w \rangle$ has three orbits in $PO(p+1,q+1)/P$ characterized by the isotropy and linear dependence with respect to $w$. Thus each orbit of $K(u,v)$ is determined according to the isotropy of $u, v$ and according to the relative position to the line $\langle u,v\rangle$.

We show that there exist symmetries at points of each orbit of $K(u,v)$. Instead of fixing $\langle u\rangle,\langle v \rangle$ and discussing symmetries at various points $\langle w \rangle$, we fix the point $\langle e_0 \rangle$ and we choose admissible $\langle u \rangle$ and $\langle v \rangle$ such that $\langle e_0 \rangle$ lies in the right orbit. Then we find a symmetry at $\langle e_0 \rangle$, and there are symmetries of the same behavior on the whole orbit given by $\langle e_0 \rangle$ due to transitivity. Let us remind the description of all symmetries at the origin $\langle e_0 \rangle$:
$$
s_{Z}=\left(
\begin{matrix}
-1&-Z& {1 \over 2}ZJZ^T \\ 0&E&-JZ^T \\ 0&0&-1
\end{matrix}
\right)
,$$ 
where $Z=(z_1, \dots, z_{p+q}) \in \R^{p+q*}$ is arbitrary.

Let us start with the orbit corresponding to the situation $m(e_0,u)\neq 0$ and $m(e_0,v)\neq 0$. We can choose $u=e_0+\sqrt{2}e_1-e_{p+q+1}$ and $v=e_0-\sqrt{2}e_{p+q}+e_{p+q+1}$. There is exactly one symmetry $s_Z$ for $Z$ of the form $z_1=-\sqrt{2}$ and $z_{p+q}=\sqrt{2}$ and $z_i=0$ for the remaining indexes. This symmetry swaps $\langle u \rangle$ and $\langle v \rangle$ and there is no symmetry preserving them.

Let us now consider the orbit for $m(e_0,u)=0$ and $m(e_0,v)\neq 0$ (which is the same orbit as the situation $m(e_0,u)\neq 0$ and $m(e_0,v)=0$). We choose $u=e_1+e_{p+q}$ and $v=e_{p+q+1}$. If there is a symmetry, then it has to preserve $\langle u \rangle$ and $\langle v \rangle$ due to the isotropy, and it turns out that there is exactly one symmetry $s_Z$ for $Z=0$.

The next possibility is the orbit for the situation $m(e_0,u)=m(e_0,v)=0$ and $e_0 \in \langle u,v \rangle$. We can choose $u=e_{1}+e_{p+q}$
 and $v=e_0+e_{1}+e_{p+q}$, and there are  (generally many) symmetries $s_Z$ for $Z$ satisfying $z_1+z_{p+q}+1=0$, which swap $\langle u \rangle$ and $\langle v \rangle$.
There are no symmetries preserving them.

In fact, this covers all possible orbits  for the case $p=1$ or $q=1$, i.e. the Lorentzian signature. In other cases, there is the remaining orbit for the case $m(u,e_0)=m(v,e_0)=0$ and $e_0 \notin \langle u,v \rangle$. We can choose $u=e_1+e_{p+q}$ and $v=e_2+e_{p+q-1}$. There are (generally many) symmetries $s_Z$ for $Z$ satisfying $z_1+z_{p+q}=0$ and $z_2+z_{p+q-1}=0$. These symmetries preserve $\langle u \rangle$ and $\langle v \rangle$ and there are no symmetries swapping them.
 
Altogether, there is no smooth system of symmetries, although there can exist infinite amount of symmetries at the points. 
In particular, it cannot form a pseudo--Riemannian symmetric space.
\end{exam}

Let us point out that this principle does not work in the case $p,q>0$, if we remove two points corresponding to non--isotropic vectors. This also gives a counter--example for the existence of global symmetries in the case $\ka=0$, see  Proposition \ref{existence-2}. In fact, the global existence of symmetries is the question of topology of the manifold.

\begin{exam}
Assume $p,q>0$ and consider the locally flat manifold $M:=PO(p+1,q+1)/P - \{\langle u \rangle,\langle v\rangle\}$, where $u,v \in \R^{p+q+2}$ are arbitrary non--zero null vectors non--isotropic for $m$, i.e. $m(u,v) \neq 0$. Choose $u=e_{p+q+1}$ and $v=e_0+e_1+e_{p+q}$, thus $m(e_0,u) \neq 0$, $m(e_0,v)=0$ and $m(u,v) \neq 0$. 
There cannot exist a symmetry at $\langle e_0 \rangle$ swapping $\langle u \rangle$ and $\langle v\rangle$ due to isotropy. It turns out that there is also no symmetry preserving them. 
Indeed, one can see from the previous example that each symmetry preserving $\langle u \rangle$ 
is of the form $s_Z$ for $Z=0$, while each symmetry $s_Z$ preserving 
$\langle v \rangle$ has to satisfy $z_1+z_{p+q}+2=0$.
\end{exam}

Let us finally show that the case $p=0$ or $q=0$, i.e. the Riemannian signature, behaves differently. Clearly, we cannot remove points corresponding to two isotropic vectors, because there are no two such vectors available. However, we can remove points $\langle u \rangle$ and $\langle v \rangle$ corresponding to two non--isotropic vectors $u,v$. Consider $q=0$. The manifold $M:=PO(1,p+1)/P -\{\langle u \rangle,\langle v \rangle\}$ is still homogeneous for the subgroup of $PO(1,p+1)$ consisting of elements preserving or swapping $\langle u \rangle$ and $\langle v \rangle$, and $M$ consists of points $\langle w \rangle$ for vectors $w$ satisfying $m(w,u) \neq 0$ and $m(w,v) \neq 0$. Let us choose $u=-e_0+\sqrt{2}e_{p}+e_{p+1}$ and $v=e_0+\sqrt{2}e_{p}-e_{p+1}$, and find the symmetry at $\langle e_0 \rangle$. It turns out that there is exactly one symmetry $s_Z$ for $Z=0$ and this symmetry swaps $\langle u \rangle$ and $\langle v \rangle$. There is no symmetry preserving $\langle u \rangle$ and $\langle v \rangle$. Altogether, $M$ is a Riemmanian symmetric space with a smooth system of symmetries given by the above symmetry $s_0$. In fact, it is just cylinder $M\cong \mathbb{R}\times SO(p)/SO(p-1)$.

\section{Remark on manifolds with parallel Weyl tensor}
There is a different definition of conformally symmetric spaces available. In \cite{polaci}, the authors define a \emph{conformally symmetric manifold} as a manifold $M$ together with a pseudo--Riemannian metric $g$ such that the Weyl tensor is parallel for the Levi--Civita connection of $g$. They call the conformally symmetric manifold  $(M,g)$ \emph{essential}, if it is not flat and it is not a locally symmetric space. There is a natural question on relations between this concept and our concept of conformal symmetries and conformal symmetric geometries. However, it turns out that this relation is not significant.

Let $(M,g)$ be a pseudo--Riemannian manifold with a non--zero parallel Weyl tensor for the Levi-Civita connection $\na$ of $g$. Suppose there is a conformal symmetry $S_x$ at $x$. Then there is a connection $\hat \na$ which is invariant with respect to $S_x$ on a neighborhood of $x$, and in particular, $\hat \na W$ vanishes at $x$.  It follows directly from the proof of the Proposition \ref{dve-symetrie} that the existence of such $\hat \na$ implies vanishing of $W$ at $x$. 
Thus if there is a conformal symmetry $S_x$ at each point $x$, then the unique corresponding system has to have the Levi--Civita connection $\na$ as its invariant connection, and $(M,g)$ is a pseudo--Riemmanian symmetric space.

Altogether, essential conformally symmetric spaces in the sense of \cite{polaci} cannot carry a conformal symmetry at each point. The same holds for each open subset. However, we know nothing about the existence of an isolated symmetry at some point, yet.


\begin{thebibliography}{99}
\bibitem{parabook} \v Cap A., Slov\'ak J., Parabolic Geometries: Background and general theory, Math. Surveys and Monogr. 154, Amer. Math. Soc., 2009 

\bibitem{CM} A. \v Cap, K. Melnick, Essential Killing fields of parabolic geometries: projective and conformal structures, Cent. Eur. J. Math. 11(12) (2013) 2053-2061 

\bibitem{polaci} A. Derdzinski, W. Roter,  Projectively flat surfaces, null parallel distributions, and conformally symmetric manifolds, Tohoku Mathematical Journal 59 (2007), no. 4, 565-602


\bibitem{HG-CEJM} J. Gregorovi\v c, Classification of invariant AHS–-structures on
semisimple locally symmetric spaces, Cent. Eur. J. Math., 11(12), 2013, 2062-2075

\bibitem{GZ-Lie} J. Gregorovi\v c, L. Zalabov\' a, On automorphisms with natural tangent action on homogeneous parabolic geometries, Journal of Lie Theory, Volume 25 (2015), 677-715

\bibitem{KT} B. Kruglikov, D. The, The gap phenomenon in parabolic geometries,  arXiv:1303.1307

\bibitem{ja-elsevier} L. Zalabov\' a, Symmetries of Parabolic Geometries, Differential Geometry and its Applications 27, No. 5, 2009, 605-622  

\bibitem{ja-springer} L. Zalabov\' a, Parabolic symmetric spaces, Ann. Global Anal. Geom., 2010, 37(2), 125-141

\bibitem{ja-CEJM}L. Zalabov\' a, A non--homogeneous, symmetric contact projective structure, Central European Journal of Mathematics, 2014, 12(6), 879-886 
\end{thebibliography}
\end{document}